\newtheorem{theorem}{Theorem}[section]
\newtheorem{proposition}[theorem]{Proposition}
\theoremstyle{definition}
\newtheorem{definition}[theorem]{Definition}
\newtheorem{remark}[theorem]{Remark}
\numberwithin{equation}{section}
\def\al{\alpha}
\def\nn{\nonumber}
\def\nn{\nonumber}
\def\Z{{\mathbb Z}}
\def\C{{\mathbb C}}
\def\R{\mathbb R}
\def\rddots{\displaystyle\cdot^{\displaystyle\cdot^{\displaystyle\cdot}}}
\def\wlong{w_{\mathrm{long}}}
\newcommand\smallf[2]{{\textstyle{\frac{#1}{#2}}}}
\DeclareMathOperator{\GL}{GL}
\DeclareMathOperator{\SL}{SL}
\DeclareMathOperator{\re}{Re}
\begin{document}


\baselineskip=17pt


\title[First coefficient of Langlands Eisenstein series]{\boldmath The first coefficient of Langlands Eisenstein series for $\text{\rm \bf SL}(n,\mathbb Z)$}

\author[D. Goldfeld ]{Dorian Goldfeld}
\address{Department of Mathematics\\Columbia University\\ 2990 Broadway \\ New York, NY 10027, USA}
\email{goldfeld@columbia.edu}

\author[E. Stade]{Eric Stade}
\address{Department of Mathematics\\
University of Colorado Boulder\\
Boulder, Colorado 80309, USA}
\email{stade@colorado.edu}

\author[M. Woodbury]{Michael Woodbury}
\address{Department of Mathematics \\Rutgers, The State University of New Jersey\\
110 Frelinghuysen Rd\\
Piscataway, NJ 08854-8019, USA}
\email{michael.woodbury@rutgers.edu}

\date{}
\dedicatory{Dedicated to Henryk Iwaniec on the occasion of his 75th birthday}

\begin{abstract}
Fourier coefficients of Eisenstein series figure prominently in the study of automorphic L-functions via the Langlands-Shahidi method, and in various other aspects of the theory of automorphic forms and representations.
 
In this paper, we define Langlands Eisenstein series for ${\rm SL}(n,\mathbb Z)$ in an elementary manner, and then determine the first Fourier coefficient of these series in a very explicit form.  Our proofs and derivations  are short and simple, and use the Borel Eisenstein series as a template to determine the first Fourier coefficient of other Langlands Eisenstein series.
\end{abstract}

\subjclass[2020]{Primary 11F55; Secondary11F72}

\keywords{Langlands Eisenstein series, Fourier-Whittaker  coefficients, automorphic forms, automorphic L functions}

\maketitle

\section{Introduction}

   The classical upper half-plane is the set of all complex numbers $$\mathfrak h^2:=\{x+iy 
   \mid x\in\mathbb R,\, y>0\},$$
   which can also be realized, in group theoretic terms, by the Iwasawa decomposition (see \cite{Goldfeld2006}) as
   \begin{align*}
   \mathfrak h^2 = \GL(2, \mathbb R)/(\text{\rm O}(2,\mathbb R)\cdot \mathbb R^\times) = \left\{\begin{pmatrix} y&x\\0&1\end{pmatrix} \; \Big | \; x\in\mathbb R, \,y>0\right\}.
   \end{align*}
   For $g = \left(\begin{smallmatrix} y&x\\0&1 \end{smallmatrix}\right) \in\mathfrak h^2$ and $s\in\mathbb C$ we define the power function
   \begin{equation}\label{PowerFunction-y^s}
   I_s(g) := y^s.
   \end{equation}
   Let $\Gamma_\infty = \big\{ \left(\begin{smallmatrix} 1&m\\0&1 \end{smallmatrix}\right) \, \big| \, m\in\mathbb Z\big\}.$
  Then the non-holomorphic Eisenstein series for $\text{\rm SL}(2,\mathbb Z)$ is defined for $\text{\rm Re}(s) > 1$ by the convergent series
   $$\text{\rm E}(g, s) := \sum_{\gamma\in \Gamma_\infty\backslash \text{\rm SL}(2,\mathbb Z)}  \tfrac12\cdot I_s(\gamma g).$$
   
  For $g = \left(\begin{smallmatrix} y&x\\0&1 \end{smallmatrix}\right) \in\mathfrak h^2$ (with $y$ fixed), the Eisenstein series $\text{\rm E}(g, s)$ has a Fourier expansion in the $x$-variable given by
  \begin{align} \label{FourierExpEisensteinGL(2)}
 &E(g,s) = \underset{\text{\bf constant term}}{\underbrace{y^s + \phi(s) y^{1-s}}} \\ \nn&\hskip50pt + \hskip-2pt
\underset{  \text{\bf first Fourier coeff.}} {\underbrace{\frac{2}{\pi^{-s}\Gamma(s)\zeta(2s)}}}
\sum_{n\ne0} \;\, \underset{\text{\bf Hecke eigenvalue}}{\underbrace{ \sigma_{1-2s}(n)  |n|^{s-\frac12}}}\;   {W_{2,s-\frac{1}{2}}}(|n|y)\cdot e^{2\pi inx}
\end{align}
where
 $$\phi(s) = \sqrt{\pi}\,\frac{\Gamma(s-\tfrac12)\zeta(2s-1)}{\Gamma(s)\zeta(2s)}, \quad\sigma_s(n) = \sum_{\substack{d|n\\ d>0}} d^s,$$and $$W_{2,\al}(y) = \frac{\sqrt{y}}{2}\int\limits_0^\infty e^{- \pi y (u+{1/ u})}\,u^{\al}\,\frac{du}{u}.$$

 The Fourier expansion (\ref{FourierExpEisensteinGL(2)}) is one of the most important in the theory of  modular forms. We have singled out the \emph{``constant term,''} the \emph{``first Fourier coefficient,''} and the \emph{``Hecke eigenvalue,''} which have each played a significant role in the history of the subject. 
 
Let $F$ be a number field with associated adele ring $\mathbb A_F$. The constant term of the Fourier expansion of Langlands Eisenstein series for a quasi-split group over $\mathbb A_F$ has been known for a long time (see \cite{Langlands1976}, \cite{MR0419366}, \cite{MR951897}). The Langlands-Shahidi method (first introduced in \cite{MR610479}) is a method to compute local coefficients for generic representations of reductive groups. In the case of Eisenstein series, Shahidi uses the Casselman-Shalika formula for Whittaker functions to express the the first coefficient as a product of L-functions (see \cite{MR816396}, \cite{MR1070599}). This gives a new proof of the analytic continuation and functional equation of Rankin-Selberg L-functions since they occur in the non-constant term of certain Eisenstein series.

The Langlands-Shahidi method of studying L-functions by way of Eisenstein series has numerous applications.  For example, Kim and Shahidi  apply this method to the analysis of $\GL(2)\times\GL(3)$ tensor product  representations \cite{MR1923967}, and to the symmetric cube representation on $\GL(2)$   \cite{MR1726704} \cite{MR1923967}, deriving functoriality results in both cases.  Further, from the symmetric cube result, they are able to advance the state of the art concerning the Ramanujan-Petersson and Selberg conjectures for $\GL(2)$, obtaining an upper bound of $5/34$ for Hecke eigenvalues of $\GL(2)$ Maass forms, over any number field and at any prime (finite or infinite).
 
In additional work, Kim \cite{MR1937203} uses the Langlands-Shahidi method to obtain functoriality results concerning exterior  square representations on $\GL(4)$, and symmetric fourth power representations on $\GL(2)$.   As a consequence of the latter result, Kim and Sarnak \cite[Appendix 2]{MR1937203} obtain a lower bound $\lambda_1\ge 975/4096\approx 0.238$ for the first eigenvalue of the Laplacian, acting on the corresponding hyperbolic space.  Moreover, in \cite{MR1890650}, Kim and Shahidi prove a criterion for cuspidality of the $\GL(2)$ symmetric fourth power representation, and deduce from this a number of results towards the Ramanujan-Petersson and Sato-Tate conjectures.
 
In further work, Kim \cite{MR2380325} applies the Langlands-Shahidi method to  exceptional groups.  In this context, various other types of L-functions arise, and a number of results concerning the holomorphy of these L-functions follow.   
 
There are numerous other applications and potential applications, some of which are discussed in \cite{MR1937203}.  In sum, information concerning Fourier coefficients of Eisenstein series is central to the Langlands-Shahidi method, which has proved a powerful tool  in the theory of automorphic forms and representations, and has strong potential for  relevance to additional  Langlands functoriality and related results.

The main goal of this paper is to first define Langlands Eisenstein series for  $\text{\rm SL}(n,\mathbb Z)$ in an elementary manner, and then determine the first Fourier coefficient of the Langlands Eisenstein series  in a very explicit form. This result is stated in Theorem \ref{Thrm:FirstEisCoeff} which is the main theorem of this paper. The proof of this theorem is also short and simple (following the methods introduced in \cite{GMW2021}) using the Borel Eisenstein series as a template to determine the first Fourier coefficient of other Eisenstein series.

\section{Basic functions on the generalized upper half plane $\mathfrak h^n$}

For an integer $n\ge 2$, let $U_n(\R)\subseteq \GL(n,\R)$ denote the group of upper triangular unipotent matrices and let
  $\text{O}(n,\R)\subseteq \GL(n,\R)$ denote the group of real orthogonal matrices.  

\begin{definition}[Generalized upper half plane]
  We define the \emph{generalized upper half plane} as 
 \[ \mathfrak h^n := \GL(n,\mathbb R)/\left(\text{O}(n,\R)\cdot\R^\times   \right). \]
 By  the Iwasawa decomposition of $\GL(n)$ (see \cite{Goldfeld2006}) every element of $\mathfrak h^n$ has a coset representative of the form $g=x y$ where
\begin{equation}\label{eq:ymatrix-def}
x = \left(\begin{smallmatrix} 
  1 & x_{1,2} & x_{1,3}& \cdots  & & x_{1,n}\\
  & 1& x_{2,3} &\cdots & & x_{2,n}\\
  & &\hskip 2pt \ddots & & & \vdots\\
  & && & 1& x_{n-1,n}\\
  & & & & &1\end{smallmatrix}\right) \in {U}_n(\mathbb R), \qquad\;
 y =
    \left(\begin{smallmatrix} y_1y_2\cdots
    y_{n-1} & & & \\
    & \hskip -30pt y_1y_2\cdots y_{n-2} & & \\
    & \ddots &  & \\
    & & \hskip -5pt y_1 &\\
    & & &  1\end{smallmatrix}\right),
\end{equation}
with $y_i > 0$ for each $1 \le i \le n-1$.  The group $\GL(n,\R)$ acts as a group of transformations on $\mathfrak h^n$ by left multiplication.
\end{definition}

\begin{definition}[{Character of $U_n(\R)$}]
Let $M=(m_1,\ldots,m_{n-1})\in \Z^{n-1}$.  For an element $x\in U_n(\R)$ of the form
\begin{equation}\label{eq:xmatrix-def}
 x = \left(\begin{smallmatrix} 
  1 & x_{1,2} & x_{1,3}& \cdots  & & x_{1,n}\\
  & 1& x_{2,3} &\cdots & & x_{2,n}\\
  & &\hskip 2pt \ddots & & & \vdots\\
  & && & 1& x_{n-1,n}\\
  & & & & &1\end{smallmatrix}\right),
\end{equation}
we define {\it the character $ \psi_M$} by
\begin{equation}\label{eq:psiMdef}
 \psi_M(x):= m_1x_{1,2}+m_2x_{2,3}+\cdots+m_{n-1} x_{n-1,n}.
\end{equation}
\end{definition}

Next, we generalize  the power function (\ref{PowerFunction-y^s}) which is used to construct the Eisenstein series for $\SL(2,\mathbb Z).$
\begin{definition}[{Power function}] Fix an integer $n\ge 2.$ 
Let $$\alpha=(\alpha_1,\ldots,\alpha_n)\in \C^n$$ with $\alpha_1+\cdots +\alpha_n=0$.  Let $\rho=(\rho_1,\ldots,\rho_n)$, where $\rho_i=\frac{n+1}{2}-i$ for $i=1,2,\ldots,n$.   We define a \emph{power function} on $xy\in \mathfrak h^n$  by
\begin{equation}\label{Ialpha}
I_n(xy,\alpha) =\prod_{i=1}^n d_i^{\alpha_i+\rho_i}= \prod_{i=1}^{n-1}y_i^{\alpha_1+\cdots+\alpha_{n-i}+\rho_1+\cdots+\rho_{n-i}},
\end{equation}
where $d_i=\prod\limits_{j\le n-i}y_j$ is the $j$-th diagonal entry of the matrix $g=xy$ as above.
\end{definition}

\begin{definition}[{Weyl group}] \label{WeylGroup}
Let $W_n \cong S_n$ denote the \emph{Weyl group} of $\GL(n, \mathbb R).$  We consider it as the subgroup of $\GL(n,\R)$ consisting of permutation matrices, i.e., matrices that have exactly one $1$ in each row/column and all zeros otherwise.   The \emph{long element of $W_n$} is $\wlong:=\left( \begin{smallmatrix} & & 1 \\ & \rddots & \\ 1 & & \end{smallmatrix}\right)$.
\end{definition}

\begin{definition}[{Jacquet's Whittaker function}]\label{def:JacWhittFunction}
Let $g\in \GL(n,\R)$ with $n\geq 2$.  Let $\alpha = (\alpha_1,\alpha_2, \;\ldots, \;\alpha_n)\in \C^n$ with $\alpha_1+\cdots +\alpha_n=0$.  We define the \emph{completed    Whittaker function} $W^{\pm}_{n,\alpha}: \GL(n,\mathbb R)\big/\left(
  \text{O}(n,\mathbb R)\cdot \mathbb R^\times\right) \to \mathbb C$   by the integral
$$W^{\pm}_{n,\alpha}(g) := \prod_{1\leq j< k \leq n} \frac{\Gamma\big(\frac{1 + \alpha_j - \alpha_k}{2}\big)}{\pi^{\frac{1+\alpha_j - \alpha_k}{2}} }\cdot \int\limits_{U_4(\mathbb R)} I_n(w_{\mathrm{long}} ug,\alpha) \,\overline{\psi_{1, \ldots, 1,\pm 1}(u)} \, du, $$
which converges absolutely if $\re(\al_i-\al_{i+1})>0$ for $1\le i\le n-1$ (cf.  \cite{GMW2021}), and has meromorphic continuation to all $\alpha\in \C^n$ satisfying $\alpha_1+\cdots +\alpha_n=0$.
\end{definition}

\begin{remark}
With the additional Gamma factors included in this definition (which can be considered as a \emph{``completed''} Whittaker function) there are $n!$ functional equations. This is equivalent to the fact that the Whittaker function is invariant under all permutations of  $\alpha_1,\alpha_2, \ldots, \alpha_n$.  Moreover, even though the integral (without the normalizing factor) often vanishes identically as a function of $\al$, this normalization never does.

If $g$ is a diagonal matrix in $\GL(n,\mathbb R)$ then the value of $W^{\pm}_{n,\alpha}(g)$ is independent of sign, so we drop the $\pm$. We also drop the $\pm$ if the sign is $+1$.
\end{remark}

\section{The Borel Eisenstein series for $\SL(n,\mathbb Z)$}

The {Borel  subgroup} $\mathcal B$ for $\GL(n,\mathbb R)$ is given by
$$\mathcal B=\left\{\left(\begin{smallmatrix}
*&* &\cdots &*\\
& * &\cdots & *\\
&&\ddots &\vdots \\
& & & *\end{smallmatrix}\right) \subset \GL(n,\mathbb R)   \right\}.$$
Among the general parabolic subgroups  defined in Definition \ref{GLnParabolic}, the Borel  subgroup is  minimal. The Borel Eisenstein series $E_\mathcal B(g, {\alpha})$ for $\SL(n,\mathbb Z)$ is a complex-valued function of  variables $g\in \GL(n,\mathbb R)$ and $\alpha=(\alpha_1,\ldots,\alpha_n)\in \mathbb C^n$ where $\alpha_1+\cdots+\alpha_n=0.$ For $\Gamma_n:=\SL(n,\mathbb Z)$ and $\text{\rm Re}(\alpha_i)-\text{\rm Re}(\alpha_{i+1})> 1,$ {\small$(i=1,\ldots,n-1)$},  it is defined by the absolutely convergent series
\begin{equation}
E_\mathcal B(g,\alpha) := \sum_{(\mathcal B\,\cap\,\Gamma_n)\backslash \Gamma_n} I_n(\gamma g, \alpha).\end{equation}
\begin{proposition}[The $M^{\rm{th}}$ Fourier-Whittaker coefficient of  $E_{\mathcal B}$] \label{LanglandsParPMin}  Define the vector $M := (m_1,m_2,\ldots, m_{n-1}) \in \mathbb Z_{+}^{n-1}$ and the matrix
$$M^*:=  \left(\begin{smallmatrix} m_1m_2\cdots
    m_{n-1} & & & \\
     \qquad\quad \ddots&  & & \\
    &\hskip-8pt m_1m_2 &  & \\
    & & \hskip -12pt m_1 &\\
    & & &  1\end{smallmatrix}\right).$$ Then the $M^{th}$ term in the  Fourier-Whittaker expansion of $E_{\mathcal B}$ (see \cite{Goldfeld2006}) is given by
\begin{align*}
\int\limits_{U_n(\mathbb Z)\backslash U_n(\mathbb R)} E_{\mathcal B}(ug, \alpha)\, \overline{\psi_M(u) }\; du \; = \; \frac{A_{E_{\mathcal B}}(M,\alpha)}
{\prod\limits_{k=1}^{n-1}  m_k^{k(n-k)/2}} \; W_{n,\alpha}\big(M^* g\big),
\end{align*}
where $A_{E_{B}}(M,\alpha) = A_{E_{B}}\big((1,\ldots,1),\alpha\big) \cdot \lambda_{E_{B}}(M,\alpha),$
and
\begin{equation}\label{HeckeEigenvaluePMin}
\lambda_{E_{B}}\big((m,1,\ldots, 1), \alpha\big) = \underset{c_1c_2\cdots c_n=m} {\sum_{c_1,\ldots,c_n\in\mathbb Z_{+}}} c_1^{\alpha_1} c_2^{\alpha_2}\cdots  c_n^{\alpha_n}, \qquad (m\in\mathbb Z_{+})
\end{equation}
  is the $(m,1,\ldots,1)^{th}$ (or more informally the $m^{th}$) Hecke eigenvalue of $E_{B}$.
  \end{proposition}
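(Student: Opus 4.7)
The plan is to carry out a direct unfolding of the Fourier--Whittaker integral via Bruhat decomposition. Writing
$$\Gamma_n = \bigsqcup_{w \in W_n} \Gamma_n \cap (\mathcal B w \mathcal B),$$
we partition $(\mathcal B \cap \Gamma_n)\backslash \Gamma_n$ into cells indexed by Weyl elements. Substituting this into the defining series for $E_{\mathcal B}$ and interchanging sum and integral decomposes the Fourier--Whittaker coefficient as a sum of contributions $\mathcal I_w$, one per $w \in W_n$.

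The first step is to show that $\mathcal I_w = 0$ for every $w \neq \wlong$. Since $M = (m_1, \ldots, m_{n-1}) \in \mathbb Z_{+}^{n-1}$ has every entry nonzero, the character $\psi_M$ is non-degenerate, i.e.\ nontrivial on every simple-root unipotent subgroup. If $w \neq \wlong$, the coset representatives in the $w$-cell retain full periodicity in at least one simple-root direction $x_{i,i+1}$ that $\psi_M$ detects nontrivially, and the resulting integral of $e^{-2\pi i m_i x_{i,i+1}}$ over a fundamental period vanishes. Only the big cell corresponding to $\wlong$ survives.

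For the big cell, each coset admits a representative of the form $\gamma = c \cdot \wlong \cdot v$ where $c$ is a diagonal matrix with positive rational entries parametrizing the coset and $v \in U_n(\mathbb Z)$. The sum over $v$ combines with the integration over the compact quotient $U_n(\mathbb Z)\backslash U_n(\mathbb R)$ to unfold into an integral over all of $U_n(\mathbb R)$. After the change of variable $u \mapsto M^* u (M^*)^{-1}$, which converts $\psi_M$ into the standard character $\psi_{1,\ldots,1}$ at the cost of translating $g \mapsto M^* g$, the remaining integral is exactly the unnormalized Jacquet Whittaker integral of Definition \ref{def:JacWhittFunction} evaluated at $M^* g$. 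Inserting the Gamma-factor normalization produces the completed Whittaker function $W_{n,\alpha}(M^* g)$.

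The prefactor $\prod_{k=1}^{n-1} m_k^{-k(n-k)/2}$ and the Hecke eigenvalue sum \eqref{HeckeEigenvaluePMin} both emerge from this last step: the $\rho$-shift in the power function $I_n$ guarantees that conjugation by $M^*$ scales the integrand by precisely the prescribed combinatorial power of the $m_k$'s, while the parametrization of big-cell representatives by $n$-tuples of positive integers $(c_1, \ldots, c_n)$ with $c_1 \cdots c_n = m$ (in the case $M = (m, 1, \ldots, 1)$) produces the Dirichlet sum $\sum c_1^{\alpha_1} \cdots c_n^{\alpha_n}$. The main obstacle is the careful bookkeeping of the $\rho$-shifts together with the identification of the coset parametrization with the correct Hecke-type sum; this is standard but intricate, and mirrors the template computation described in \cite{GMW2021}.
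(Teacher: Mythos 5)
Your overall strategy --- Bruhat decomposition, vanishing of every cell except the one for $\wlong$ against the non-degenerate character $\psi_M$, conjugation by $M^*$ to reduce to Jacquet's integral --- is the standard route, and it is essentially what is carried out in \cite{Goldfeld2006}, which the paper cites in lieu of giving a proof. The vanishing argument for $w\ne\wlong$ and the extraction of the factor $\prod_k m_k^{-k(n-k)/2}$ from the $\rho$-shift are fine as sketched.

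The genuine gap is in your treatment of the big cell. A coset in $(\mathcal B\cap\Gamma_n)\backslash\big(\Gamma_n\cap\mathcal B\wlong\mathcal B\big)$ does \emph{not} have a representative of the form $c\cdot\wlong\cdot v$ with $c$ diagonal rational and $v\in U_n(\mathbb Z)$; the correct Bruhat factorization is $\gamma=b\,\wlong\,v$ with $v\in U_n(\mathbb Q)$, and the fractional parts of $v$ are precisely where all of the arithmetic lives. Already for $n=2$ one has $\gamma=\left(\begin{smallmatrix}a&b\\c&d\end{smallmatrix}\right)=u_{a/c}\,\diag(1/c,c)\,\wlong\,u_{d/c}$ with $d/c\notin\mathbb Z$ in general; unfolding the integral over $U_2(\mathbb Z)\backslash U_2(\mathbb R)$ against the sum over $d$ in a fixed residue class mod $c$ leaves behind the Ramanujan sum $\sum_{d\,(\mathrm{mod}\,c),\,(d,c)=1}e^{2\pi i m d/c}$, and it is the evaluation of these sums (and their $\GL(n)$ analogues, the degenerate Gauss sums attached to $\wlong$ and $\psi_M$) that produces both the $\prod\zeta^*(1+\alpha_j-\alpha_k)^{-1}$ in the first coefficient and the divisor sum \eqref{HeckeEigenvaluePMin}. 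With $v\in U_n(\mathbb Z)$ as you wrote, the unfolding would produce no exponential sums at all, and your assertion that the Hecke sum ``emerges from the parametrization of big-cell representatives by $n$-tuples $(c_1,\ldots,c_n)$ with $c_1\cdots c_n=m$'' is not a parametrization of anything in your setup --- it is the \emph{answer} to the Gauss-sum evaluation (equivalently, the Casselman--Shalika/Shintani formula, which for $m=p^k$ yields the complete homogeneous symmetric polynomial in $p^{\alpha_1},\ldots,p^{\alpha_n}$). That evaluation, together with the resulting factorization $A_{E_{\mathcal B}}(M,\alpha)=A_{E_{\mathcal B}}\big((1,\ldots,1),\alpha\big)\,\lambda_{E_{\mathcal B}}(M,\alpha)$, is the actual content of the proposition and is missing from your argument.
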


  \begin{proof} See \cite{Goldfeld2006}.
  \end{proof}

 \begin{proposition}[The first Fourier coefficient of  $E_{\mathcal B}$] \label{FirstCoeffPMin} We have
$$
A_{E_{B}}\big((1,\ldots,1), \alpha\big) = \, c_0 \prod_{1\leq j< k \leq n} \zeta^*\big(1 + \alpha_j - \alpha_k \big)^{-1}
$$
for some constant $c_0 \ne 0$ (depending only on $n$),   and  $$\zeta^*(w) = \pi^{-\frac{w}{2}}\Gamma\left( \frac{w}{2} \right)\zeta(w)$$ is the completed Riemann $\zeta$-function.
\end{proposition}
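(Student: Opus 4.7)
The plan is to compute the first Fourier coefficient by the classical unfolding argument, generalizing the Möbius-inversion step that produces the factor $1/\zeta^*(2s)$ in \eqref{FourierExpEisensteinGL(2)}.

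In the region of absolute convergence $\re(\alpha_i-\alpha_{i+1})>1$, I would interchange sum and integral to write
\[
\int\limits_{U_n(\mathbb Z)\backslash U_n(\mathbb R)} E_{\mathcal B}(ug,\alpha)\,\overline{\psi_{(1,\ldots,1)}(u)}\,du = \sum_{\gamma \in (\mathcal B\cap\Gamma_n)\backslash \Gamma_n}\int I_n(\gamma u g,\alpha)\,\overline{\psi_{(1,\ldots,1)}(u)}\,du,
\]
and then organize the coset sum via the Bruhat decomposition of $\Gamma_n$ into Weyl strata. For any Weyl element $w \ne w_{\mathrm{long}}$, at least one simple positive root is preserved by $w$; along that root direction $I_n(wug,\alpha)$ is invariant while the character $\psi_{(1,\ldots,1)}$ is nontrivial, so the stratum's contribution vanishes. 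Only the long Bruhat stratum survives.

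For the long stratum, I would parameterize the $(\mathcal B\cap\Gamma_n)$-coset representatives by ``denominator'' data $(c_1,\ldots,c_{n-1})\in\mathbb Z_+^{n-1}$ (the invariant factors of the successive bottom-row minors of $\gamma$), unfold the $U_n(\mathbb Z)$-periodization so that the outer integral extends over all of $U_n(\mathbb R)$, and evaluate the residual sum over the $c_i$ via iterated Möbius inversion. The outcome is
\[
\prod_{1\le j<k\le n}\zeta(1+\alpha_j-\alpha_k)^{-1}\;\cdot\int\limits_{U_n(\mathbb R)} I_n(w_{\mathrm{long}} u g,\alpha)\,\overline{\psi_{(1,\ldots,1)}(u)}\,du.
\]
By Definition \ref{def:JacWhittFunction} the Jacquet integral equals $W_{n,\alpha}(g)$ divided by $\prod_{j<k}\Gamma\bigl((1+\alpha_j-\alpha_k)/2\bigr)\,\pi^{-(1+\alpha_j-\alpha_k)/2}$, so the $\pi$-powers, $\Gamma$'s and $\zeta$'s combine precisely into the stated product $\prod_{j<k}\zeta^*(1+\alpha_j-\alpha_k)^{-1}$. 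Measure normalizations and overall combinatorial constants are absorbed into a single nonzero $c_0 = c_0(n)$.

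The hardest step is the coset-sum evaluation: one must cleanly parameterize the $(\mathcal B\cap \Gamma_n)$-cosets in the long stratum via Smith normal forms of the appropriate minors of $\gamma$, and verify that the $(n-1)$-variable sum over invariant factors actually factors into $\binom{n}{2}$ independent inverse Riemann-zeta factors indexed by pairs $(j,k)$ rather than only $n-1$ such factors. I would attack this by induction on $n$, using the $\SL(2)$ Ramanujan-sum identity $\sum_{c\ge1}\mu(c)c^{-2s}=\zeta(2s)^{-1}$ as the base case and peeling off the last row of $\gamma$ at each inductive step; alternatively, one can invoke the archimedean Gindikin–Karpelevich formula to short-circuit the combinatorial bookkeeping entirely.
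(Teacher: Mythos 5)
Your overall architecture---unfold, kill every Bruhat cell except the long one using the non-degeneracy of $\psi_{(1,\ldots,1)}$, evaluate the arithmetic sum over the $c_i$, and then absorb the $\Gamma$- and $\pi$-factors from the normalization in Definition \ref{def:JacWhittFunction} to complete the $\zeta$'s---is the standard route and is sound in outline; note that the paper gives no argument of its own here, deferring entirely to \cite{GMW2021}, so you are supplying a proof rather than diverging from one. The reduction to $\wlong$ is correct as you describe it: if $w\ne\wlong$ there is a simple root subgroup $U_{\alpha_i}$ with $wU_{\alpha_i}w^{-1}\subset U_n(\R)$, the unfolded integrand is invariant under translation by it while $\psi_{(1,\ldots,1)}$ restricts to a nontrivial character of $\mathbb{R}/\mathbb{Z}$ there, and the cell contributes zero. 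The final bookkeeping turning $\zeta(1+\alpha_j-\alpha_k)^{-1}$ times the Jacquet integral into $\zeta^*(1+\alpha_j-\alpha_k)^{-1}W_{n,\alpha}(g)$ is also right.

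The gap is exactly where you flag it, and flagging it is not the same as closing it. The long-cell contribution is a Dirichlet series in $(c_1,\ldots,c_{n-1})$ whose numerators are the degenerate $\GL(n)$ Kloosterman sums $S_{\wlong}\bigl(1,\psi_{(1,\ldots,1)};c\bigr)$ (trivial character on one side), and the assertion that this $(n-1)$-fold sum factors into the $\binom{n}{2}$ factors $\prod_{1\le j<k\le n}\zeta(1+\alpha_j-\alpha_k)^{-1}$ \emph{is} the content of the proposition; neither of your two proposed routes delivers it as written. The induction via ``peeling off the last row'' is never executed, and it is genuinely delicate because the Pl\"ucker/bottom-row-minor parameterization does not make the sum visibly multiplicative across pairs $(j,k)$. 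The suggested shortcut is the wrong tool twice over: the Gindikin--Karpelevich formula computes the \emph{constant} term (the intertwining integral with no character), not a non-degenerate coefficient, and the $\zeta$-factors here come from the finite places (the sum over the $c_i$), not from the archimedean integral, which must be kept intact as the Jacquet integral. What the step actually requires is the Casselman--Shalika formula at each prime---the value $\prod_{j<k}\bigl(1-p^{-(1+\alpha_j-\alpha_k)}\bigr)$ of the normalized unramified Whittaker function at the identity, whose Euler product gives the claimed inverse zetas---or an explicit evaluation of the degenerate Kloosterman sums. Until one of these is supplied, you have a correct plan rather than a proof.
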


\begin{proof} See \cite{GMW2021}.
\end{proof}

\section{Eisenstein series attached to lower-rank Maass cusp forms on Levi components}

\begin{definition}[{Langlands parameters}]
Let $n\geq 2$.  A vector $$\alpha=(\alpha_1,\ldots,\alpha_n)\in \C^n$$ is termed a \emph{Langlands parameter} if $\alpha_1+\cdots+\alpha_n=0$.
\end{definition}

\begin{definition}[{Maass cusp forms}]
 Fix $n\ge 2.$ A \emph{Maass cusp form} with Langlands parameter $\alpha\in \C^n$ for $\SL(n,\mathbb Z)$ is a smooth function $\phi: \mathfrak{h}^n \to \mathbb C$ which satisfies
 $\phi(\gamma g) = \phi(g)$
 for all $\gamma\in \SL(n,\Z)$, $g\in \mathfrak{h}^n$.  In addition, $\phi$ is square integrable and has the same eigenvalues under the action of the algebra of $\GL(n,\R)$ invariant differential operators on $\mathfrak{h}^n$ as the power function $I_n(*,\alpha)$. The Laplace eigenvalue of $\phi$ is given by (see  Section~6 in \cite{Miller_2002})
   $$\frac{n^3 - n}{24} -\frac{\alpha_1^2+\alpha_2^2+\cdots+\alpha_n^2}{2}.$$
The Maass cusp form $\phi$ is said to be {\it tempered at infinity} if the coordinates  $\alpha_1,\ldots, \alpha_n$ of the Langlands parameter are all pure imaginary.
\end{definition}

\begin{definition}[Parabolic subgroups] \label{GLnParabolic} For $n\ge 2$ and $1\le r\le n,$ consider a partition of $n$ given by
$n = n_1+\cdots +n_r$ with positive integers $n_1,\cdots,n_r.$  We define the \emph{standard parabolic subgroup} $$\mathcal P := \mathcal P_{n_1,n_2, \ldots,n_r} := \left\{\left(\begin{matrix} \GL(n_1) & * & \cdots &*\\
0 & \GL(n_2) & \cdots & *\\
\vdots & \vdots & \ddots & \vdots \\
0 & 0 &\cdots & \GL(n_r)\end{matrix}\right)\right\}.$$

Letting $I_r$ denote the $r\times r$ identity matrix, the subgroup
$$N^{\mathcal P} := \left\{\left(\begin{matrix} I_{n_1} & * & \cdots &*\\
0 & I_{n_2} & \cdots & *\\
\vdots & \vdots & \ddots & \vdots \\
0 & 0 &\cdots & I_{n_r}\end{matrix}\right)\right\}$$
is the unipotent radical of $\mathcal P$.  The subgroup
$$M^{\mathcal P} := \left\{\left(\begin{matrix} \GL(n_1) & 0 & \cdots &0\\
0 & \GL(n_2) & \cdots & 0\\
\vdots & \vdots & \ddots & \vdots \\
0 & 0 &\cdots & \GL(n_r)\end{matrix}\right)\right\}$$
 is the {\it Levi subgroup} of $\mathcal P$.
\end{definition}

 \begin{definition}[Maass form $\Phi$ associated to a parabolic $\mathcal P$] \label{def:maassparabolic}\label{InducedCuspForm} Let $n\ge 2$. Consider a partition $n = n_1+\cdots +n_r$ with $1 < r < n$. Let  $\mathcal P := \mathcal P_{n_1,n_2, \ldots,n_r} \subset \GL(n,\mathbb R).$
  For $i = 1,2,\ldots, r$, let
$\phi_i:\GL(n_i,\mathbb R)\to\mathbb C$ be either the constant function 1 (if $n_i=1$) or a Maass cusp form for $\SL(n_i,\mathbb Z)$ (if $n_i>1$).  The \emph{Maass form} $\Phi := \phi_1\otimes \cdots\otimes \phi_r$ is defined on $\GL(n,\mathbb R)=\mathcal P(\mathbb R)K$ (where $K= \text{\rm O}(n,\mathbb R))$ by the formula
$$\Phi(n m k ) := \prod_{i=1}^r \phi_i(m_i), \qquad (n\in  N^{\mathcal P}, m\in  M^{\mathcal P},k\in K)$$
where
  $m \in M^{\mathcal P}$ has the form
$m = \left(\begin{smallmatrix} m_1 & 0 & \cdots &0\\
0 & m_2 & \cdots & 0\\
\vdots & \vdots & \ddots & \vdots \\
0 & 0 &\cdots & m_r\end{smallmatrix}\right)$, with    $m_i\in \GL({n_i},\mathbb R).$  In fact, this construction works equally well if some or all of the $\phi_i$ are Eisenstein series.

 \end{definition}

\begin{definition}[Character of a parabolic subgroup]\label{ParabolicNorm}  Let $n\ge 2.$ Fix a partition
$n = n_1+n_2 + \cdots +n_r$ with associated parabolic subgroup $\mathcal P = \mathcal P_{n_1,n_2, \ldots,n_r}.$  Define
\begin{equation}\label{rhoPj}
\rho_{_{\mathcal P}}(j)=\left\{
                            \begin{array}{ll}
                              \frac{n-n_1}{2}, & j=1 \\
                              \frac{n-n_j}{2}-n_1-\cdots-n_{j-1}, & j\ge 2.
                            \end{array}
                          \right.
\end{equation}
 Let $s = (s_1,s_2, \ldots, s_r) \in\mathbb C^r$ satisfy
$\sum\limits_{i=1}^r n_i s_i = 0.$
Consider the function
$$| \cdot   |_{_{\mathcal P}}^s := I(\cdot,\alpha)$$
 on $\GL(n,\mathbb R)$, where
\begin{multline*}
\alpha=(\overbrace{s_1-\rho_{_{\mathcal P}}(1)+\smallf{1-n_1}{2}, \; s_1-\rho_{_{\mathcal P}}(1)+\smallf{3-n_1}{2}, \; \ldots \; ,s_1-\rho_{_{\mathcal P}}(1)+\smallf{n_1-1}{2}}^{n_1 \;\,\text{\rm terms}},\\ \quad \ 
\overbrace{s_2-\rho_{_{\mathcal P}}(2)+\smallf{1-n_2}{2}, \; s_2-\rho_{_{\mathcal P}}(2)+\smallf{3-n_2}{2}, \; \ldots \; ,s_2-\rho_{_{\mathcal P}}(2)+\smallf{n_2-1}{2}}^{n_2 \;\,\text{\rm terms}},\\
\vdots
\\\  \  \ 
\ldots \;\; ,\overbrace{s_r-\rho_{_{\mathcal P}}(r)+\smallf{1-n_r}{2}, \; s_r-\rho_{_{\mathcal P}}(r)+\smallf{3-n_r}{2}, \; \ldots \; ,s_r-\rho_{_{\mathcal P}}(r)+\smallf{n_r-1}{2}}^{n_r \;\,\text{\rm terms}}).
\end{multline*}
The conditions $\sum\limits_{i=1}^r n_i s_i = 0$ and $\sum\limits_{i=1}^r n_i \rho_{_{\mathcal P}}(i) = 0$
 guarantee that the entries of  $\alpha$  sum to zero.  When $g\in \mathcal P$, with diagonal block entries $m_i\in \GL(n_i,\mathbb R)$, one has
 \vskip-10pt
 $$| g  |_{_{\mathcal P}}^s=\prod_{i=1}^r\left| \text{\rm det}(m_i)\right|^{s_i},$$
   so that $| \cdot   |_{_{\mathcal P}}^s$ restricts to a character of $\mathcal P$ which is trivial on $N^{\mathcal P}$.   
 \end{definition}
 
  \begin{definition}[Langlands Eisenstein series attached to  Maass cusp forms of lower rank] \label{EisensteinSeries}
Let $\Gamma = \SL(n, \mathbb Z)$ with $n \ge 2.$  Consider a parabolic subgroup $\mathcal P = \mathcal P_{n_1,\ldots,n_r}$ of $\GL(n,\mathbb R)$ and functions $\Phi$  and $| \cdot |_{_{\mathcal P}}^s$   as given in Definitions \ref{InducedCuspForm} and  \ref{ParabolicNorm}, respectively. Let $$s = (s_1,s_2, \ldots,s_r)\in\mathbb C^r, \;\;\text{where}\;\; \sum_{i=1}^r n_is_i =0.$$
The \emph{Langlands Eisenstein series} determined by this data is defined by
\begin{equation}\label{def:EPhi}
 E_{\mathcal P, \Phi}(g,s) := \sum_{\gamma\,\in\, (\mathcal P\, \cap \,\Gamma)\backslash \Gamma}  \Phi(\gamma g)\cdot |\gamma g|^{s+\rho_{_{\mathcal P}}}_{_{\mathcal P}}
 \end{equation}
as an absolutely convergent sum for $\re(s_i)$ sufficiently large, and extends to all $s\in \C^r$ by meromorphic continuation.
\end{definition}

For $k=1,2,\ldots,r,$ let $\alpha^{(k)} := (\alpha_{k,1},\ldots,\alpha_{k,n_k})$ denote the Langlands parameters of $\phi_k.$ We adopt the convention that if $n_k=1$ then $\alpha_{k,1} = 0.$
Then the Langlands parameters of $E_{\mathcal P, \Phi}(g,s)$ (denoted $\alpha_{_{\mathcal P,\Phi}}(s)$) are
\begin{align}\label{langlandsparamsforPhi}
&\bigg (\overbrace{\alpha_{1,1}+s_1, \;\ldots \;,\alpha_{1,n_1}+s_1}^{n_1 \;\,\text{\rm terms}}, \quad\overbrace{\alpha_{2,1}+s_2, \;\ldots \;,\alpha_{2,n_2}+s_2}^{n_2 \;\,\text{\rm terms}},\\\nn&\hskip160pt
\quad\ldots \quad
,\overbrace{\alpha_{r,1}+s_r, \;\ldots \;,\alpha_{r,n_r}+s_r}^{n_r \;\,\text{\rm terms}}\bigg). 
\end{align}

\begin{proposition}[The $M^{\rm{th}}$ Fourier coefficient of   $E_{\mathcal P, \Phi}$] \label{MthEisCoeff}
 Let $$s = (s_1,s_2, \ldots,s_r)\in\mathbb C^r,$$  where $\sum\limits_{i=1}^r n_is_i =0.$
Consider  $E_{\mathcal P, \Phi}(*,s)$ with associated Langlands parameters $\alpha_{_{\mathcal P,\Phi}}(s)$ as defined in (\ref{langlandsparamsforPhi}).  Let $M = (m_1,m_2,\ldots, m_{n-1}) \in \mathbb Z_{>0}^{n-1}$.  Then the $M^{th}$ term in the Fourier-Whittaker expansion of $E_{\mathcal P, \Phi}$ is
\begin{align*}
\int\limits_{U_n(\mathbb Z)\backslash U_n(\mathbb R)} E_{\mathcal P, \Phi}(ug, s)\, \overline{\psi_M(u) }\; du \; = \; \frac{A_{E_{\mathcal P, \Phi}}(M,s)}
{\prod\limits_{k=1}^{n-1}  m_k^{k(n-k)/2}} \; W_{\alpha_{_{\mathcal P,\Phi}}(s)}\big(M g\big),
\end{align*}
where $A_{E_{\mathcal P, \Phi}}(M,s) = A_{E_{\mathcal P, \Phi}}\big((1,\ldots,1),s\big) \cdot \lambda_{E_{\mathcal P, \Phi}}(M,s),$
and
 \begin{align}\label{HeckeCoeff}
    \lambda_{E_{P,\Phi}}\big((m,1,\ldots,1),s\big) & = \hskip-10pt\underset {c_1c_2\cdots c_r = m } {\sum_{    c_1, c_2, \ldots, c_r \, \in \, \mathbb Z_{>0}}} \hskip-10pt \lambda_{\phi_1}(c_1)  \cdots \lambda_{\phi_r}(c_r)
    \cdot c_1^{s_1}  \cdots c_r^{s_r}.
\end{align}
 is the $(m,1,\ldots,1)^{th}$ (or more informally  the $m^{th}$) Hecke eigenvalue of $E_{P,\Phi}$.
 \end{proposition}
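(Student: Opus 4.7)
The plan is to reduce the computation to the analogous result for the Borel Eisenstein series (Propositions~\ref{LanglandsParPMin} and~\ref{FirstCoeffPMin}) by expanding each cusp form $\phi_i$ on its Levi factor $\GL(n_i,\mathbb R)$ in its own Fourier-Whittaker series, and then recognizing the resulting double sum as a weighted sum of Borel Eisenstein series at the shifted Langlands parameters $\alpha_{_{\mathcal P,\Phi}}(s)$ defined in~\eqref{langlandsparamsforPhi}.

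Concretely, I would invoke the Piatetski-Shapiro--Shalika expansion for each $\phi_i$: the value $\phi_i(g_i)$ equals a sum over $(\mathcal B \cap \SL(n_i,\mathbb Z))\backslash \SL(n_i,\mathbb Z)$ of $\GL(n_i)$-Jacquet Whittaker functions, weighted by Fourier coefficients that factor through the Hecke eigenvalues $\lambda_{\phi_i}(c_i)$ and the leading coefficient $A_{\phi_i}(1,\ldots,1)$. Substituting these expansions, together with the tensor factorization $\Phi = \phi_1\otimes \cdots\otimes \phi_r$, into the defining sum for $E_{\mathcal P,\Phi}$ replaces the summand $\Phi(\gamma g)\cdot |\gamma g|_{_{\mathcal P}}^{s+\rho_{_{\mathcal P}}}$ by a sum of power functions $I_n(\delta\gamma g, \alpha_{_{\mathcal P,\Phi}}(s))$ with $\delta$ running through Borel cosets internal to each block. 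Here it is crucial that the combination of the parabolic character $|\cdot|_{_{\mathcal P}}^{s+\rho_{_{\mathcal P}}}$ with the internal power functions $I_{n_i}(\,\cdot\,,\alpha^{(i)})$ reconstructs precisely $I_n$ at the shifted parameters, which is built into the very definition of $\alpha_{_{\mathcal P,\Phi}}(s)$. Collapsing the double coset sum $\sum_\gamma\sum_\delta$ into a single sum over $(\mathcal B\cap\Gamma)\backslash \Gamma$ then exhibits $E_{\mathcal P,\Phi}(g,s)$ as a superposition of Borel Eisenstein series $E_{\mathcal B}(g,\alpha_{_{\mathcal P,\Phi}}(s))$ weighted by Whittaker data of the $\phi_i$.

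Applying Proposition~\ref{LanglandsParPMin} term by term then extracts the $M$-th Fourier-Whittaker coefficient: the factor $W_{\alpha_{_{\mathcal P,\Phi}}(s)}(M^*g)/\prod_k m_k^{k(n-k)/2}$ is invariant under permutations of the Langlands parameters and so pulls out as a common factor of the inner sum, leaving a scalar which I define to be $A_{E_{\mathcal P,\Phi}}(M,s)$. To identify this scalar at $M=(m,1,\ldots,1)$, I combine the Borel Hecke eigenvalue $\lambda_{E_{\mathcal B}}((m,1,\ldots,1),\alpha)=\sum_{c_1\cdots c_n=m}c_1^{\alpha_1}\cdots c_n^{\alpha_n}$ from~\eqref{HeckeEigenvaluePMin} with the Hecke eigenvalues of each $\phi_i$ via Dirichlet-series multiplicativity: the shape of $\alpha_{_{\mathcal P,\Phi}}(s)$ groups the $n_i$ internal factors of each block into $\lambda_{\phi_i}(c_i)$, leaving an outer factorization into a single product over blocks. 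This collapses the expression into the convolution $\sum_{c_1\cdots c_r = m}\prod_i \lambda_{\phi_i}(c_i)\,c_i^{s_i}$ claimed in~\eqref{HeckeCoeff}.

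The main technical obstacle is the combinatorial identity underlying the collapse of sums: verifying that $(\mathcal P\cap \Gamma)\backslash \Gamma$ fibered by the Borel cosets internal to each $\GL(n_i,\mathbb Z)$ factor matches bijectively $(\mathcal B\cap\Gamma)\backslash \Gamma$, together with the careful accounting of how the parabolic character and the individual Levi power functions recombine to reproduce $I_n$ at $\alpha_{_{\mathcal P,\Phi}}(s)$. A second subtlety is regrouping the block-wise Hecke convolutions so that the $n_i$-fold Borel-type divisor sum on each block compresses into a single $\lambda_{\phi_i}(c_i)$; this uses the fact that $\lambda_{\phi_i}$ is itself the Hecke eigenvalue of a cusp form with Langlands parameter $\alpha^{(i)}$, so that the partial Dirichlet series of $\phi_i$ agrees with the corresponding partial Borel divisor sum. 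Once these bookkeeping steps are justified, the proof reduces to a direct application of Propositions~\ref{LanglandsParPMin} and~\ref{FirstCoeffPMin}.
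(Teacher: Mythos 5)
Your proposal has two genuine gaps; note also that the paper itself does not prove this proposition but simply cites \cite{Goldfeld2006}, where the computation is done by unfolding the integral directly against the Bruhat decomposition of $(\mathcal P\cap\Gamma)\backslash\Gamma$. The first and central gap is the structural claim that substituting the Fourier--Whittaker expansions of the $\phi_i$ turns the summand $\Phi(\gamma g)\,|\gamma g|_{_{\mathcal P}}^{s+\rho_{_{\mathcal P}}}$ into a sum of power functions $I_n(\delta\gamma g,\alpha_{_{\mathcal P,\Phi}}(s))$, so that $E_{\mathcal P,\Phi}$ is exhibited as a superposition of Borel Eisenstein series at the induced parameter. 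A Maass cusp form is not a linear combination of power functions over Borel cosets: its Piatetski-Shapiro--Shalika expansion (which runs over $U_{n_i-1}(\Z)\backslash\SL(n_i-1,\Z)$, not over $(\mathcal B\cap\SL(n_i,\Z))\backslash\SL(n_i,\Z)$) expresses it as a sum of \emph{Whittaker functions}, each of which is an integral of a power function over the full unipotent group, not a power function itself. Your own first sentence of the second paragraph correctly invokes Whittaker functions, but the passage to ``a sum of power functions $I_n(\delta\gamma g,\alpha_{_{\mathcal P,\Phi}}(s))$'' in the next sentence is exactly where the argument fails. If $E_{\mathcal P,\Phi}$ really were such a superposition, its first coefficient would be the product of $\zeta^*(1+\alpha_i-\alpha_j)^{-1}$ from Proposition \ref{FirstCoeffPMin}; Theorem \ref{Thrm:FirstEisCoeff} shows it instead involves $L^*(1,\mathrm{Ad}\,\phi_k)^{-1/2}$ and Rankin--Selberg $L$-functions. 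This is precisely why the paper's use of the Borel series is a \emph{template} --- a formal replacement that predicts the shape of the answer --- and not an identity.

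The second gap is in your derivation of \eqref{HeckeCoeff}. You compress the $n_i$-fold Borel divisor sum on each block into $\lambda_{\phi_i}(c_i)$ on the grounds that ``the partial Dirichlet series of $\phi_i$ agrees with the corresponding partial Borel divisor sum.'' That would require $\lambda_{\phi_i}(c)=\sum_{d_1\cdots d_{n_i}=c}d_1^{\alpha_{i,1}}\cdots d_{n_i}^{\alpha_{i,n_i}}$, i.e., that the Satake parameters of $\phi_i$ at every finite prime $p$ equal $p^{\alpha_{i,j}}$ with $\alpha^{(i)}$ the \emph{archimedean} Langlands parameter. This identity holds for the Borel Eisenstein series, as in \eqref{HeckeEigenvaluePMin}, but is false for a genuine cusp form, whose finite-place Hecke eigenvalues are data independent of the infinity type. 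The correct route to \eqref{HeckeCoeff} is to act with the Hecke operators on the series \eqref{def:EPhi} directly via the explicit coset decomposition (equivalently, the unramified computation for the induced representation), as in \cite{Goldfeld2006}: the convolution over $c_1\cdots c_r=m$ arises from the block structure of $\mathcal P$, with each block contributing $\lambda_{\phi_i}(c_i)c_i^{s_i}$ as a single unit, not from any agreement between $\lambda_{\phi_i}$ and a divisor sum in the archimedean parameters.
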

 
\begin{proof}
The proof of (\ref{HeckeCoeff}) is given in \cite{Goldfeld2006}.
\end{proof}

 \begin{theorem}[The first Fourier coefficient of   $E_{\mathcal P, \Phi}$] \label{Thrm:FirstEisCoeff}
 Assume that each Maass form $\phi_k$ (with $1\le k\le r$) occurring in $\Phi$ has Langlands parameters
  $\alpha^{(k)} := (\alpha_{k,1},\ldots,\alpha_{k,n_k})$ with the convention that if $n_k=1$ then $\alpha_{k,1}=0.$ We also assume that each $\phi_k$ is normalized to have Petersson norm $\langle \phi_k, \phi_k\rangle = 1.$
 Then the first coefficient of $E_{\mathcal P, \Phi}$ is given by
 \begin{align*} 
 & A_{E_{\mathcal P, \Phi}}\big((1,\ldots,1),s\big) =  \underset{n_k\ne1}{\prod_{k=1}^r} L^*\big(1, \text{\rm Ad}\; \phi_k\big)^{-\frac12 }\hskip-4pt
  \prod_{1\le j<\ell\le r}   L^*\big(1+s_j-s_{\ell}, \;\phi_j\times\phi_{\ell}\big)^{-1}
 \end{align*} 
 up to a non-zero constant factor with absolute value depending  only on $n$. 
Here
$$L^*(1,\,\text{\rm Ad}\;\phi_k) = L(1,\,\text{\rm Ad}\;\phi_k) \prod_{1\le i \ne j\le n_k}  \Gamma\left(\frac{1+\alpha_{k,i}-\alpha_{k,j}}{2}   \right)$$
and
$$L^*(1+s_j-s_\ell, \;\phi_j\times\phi_\ell) = \begin{cases} L^*(1+s_j-s_\ell,\, \phi_j) & \text{if}\; n_\ell =1 \;\text{and}\;n_j\ne 1,\\
L^*(1+s_j-s_\ell,\, \phi_\ell) & \text{if}\; n_j=1 \;\text{and}\;n_\ell\ne 1,\\
\zeta^*(1+s_j-s_\ell) & \text{if}\; n_j=n_\ell=1. 
\end{cases}
$$
Otherwise, $L^*(1+s_j-s_\ell, \phi_j\times\phi_\ell)$ is the completed Rankin-Selberg L-function.
\end{theorem}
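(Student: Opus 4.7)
The plan is to reduce the computation to Proposition~\ref{FirstCoeffPMin} (the Borel case) via unfolding against the Fourier-Whittaker expansions of the cusp forms $\phi_k$. As a guiding heuristic, if each $\phi_k$ were formally replaced by a Borel Eisenstein series on $\GL(n_k)$ with Langlands parameters $\alpha^{(k)}$, then $E_{\mathcal P,\Phi}$ would itself become a Borel Eisenstein series on $\SL(n,\mathbb Z)$ with parameters $\alpha_{_{\mathcal P,\Phi}}(s)$ from (\ref{langlandsparamsforPhi}), and Proposition~\ref{FirstCoeffPMin} would give its first coefficient as $\prod_{1\le j<k\le n}\zeta^*\bigl(1+(\alpha_{_{\mathcal P,\Phi}}(s))_j-(\alpha_{_{\mathcal P,\Phi}}(s))_k\bigr)^{-1}$. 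The actual proof amounts to showing that the pairs $(j,k)$ lying inside a single Levi block $\ell$ reorganize into a single factor $L^*(1,\text{\rm Ad}\,\phi_\ell)^{-1/2}$, those straddling distinct blocks $\ell_1<\ell_2$ reorganize into $L^*(1+s_{\ell_1}-s_{\ell_2},\phi_{\ell_1}\times\phi_{\ell_2})^{-1}$, and the degenerate cases with $n_\ell=1$ produce the $L^*$ or $\zeta^*$ factors listed in the theorem.

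Step one is to write the first Fourier-Whittaker integral as
\[
\int\limits_{U_n(\mathbb Z)\backslash U_n(\mathbb R)} \sum_{\gamma\in(\mathcal P\cap\Gamma)\backslash\Gamma} \Phi(\gamma ug)\,|\gamma ug|_{_{\mathcal P}}^{s+\rho_{_{\mathcal P}}}\,\overline{\psi_{1,\ldots,1}(u)}\,du,
\]
and to unfold the sum by a Bruhat-type decomposition of $\mathcal P\backslash\GL(n)$. By genericity of $\psi_{1,\ldots,1}$, only the open cell (through $\wlong$ modulo the long Weyl element of the Levi $M^{\mathcal P}$) survives, exactly as in the proof of Proposition~\ref{FirstCoeffPMin}. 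Step two is to insert the Fourier-Whittaker expansion of each $\phi_k$ into $\Phi$; this introduces the first Whittaker coefficient $A_{\phi_k}(1,\ldots,1)$ together with the completed Jacquet Whittaker function $W_{n_k,\alpha^{(k)}}$ on the $k$th Levi block. The Petersson normalization $\langle\phi_k,\phi_k\rangle=1$, combined with the archimedean Rankin-Selberg identity $\langle\phi_k,\phi_k\rangle = c_k\,|A_{\phi_k}(1,\ldots,1)|^2\,L^*(1,\text{\rm Ad}\,\phi_k)$ (the Bump/Stade formula), forces $|A_{\phi_k}(1,\ldots,1)|^2\propto L^*(1,\text{\rm Ad}\,\phi_k)^{-1}$, producing precisely the intra-block factor of the theorem.

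The remaining integration over the unipotent radical $N^{\mathcal P}$ couples the Whittaker functions of $\phi_{\ell_1}$ and $\phi_{\ell_2}$ for each pair $\ell_1<\ell_2$; once the Gamma factors from Definition~\ref{def:JacWhittFunction} are properly accounted for, this integral is the Jacquet-Shalika/Rankin-Selberg integral representation and evaluates to $L^*(1+s_{\ell_1}-s_{\ell_2},\phi_{\ell_1}\times\phi_{\ell_2})^{-1}$. The three degenerate cases in the statement follow by specializing one or both Whittaker functions to the constant~$1$. The main obstacle is the bookkeeping of Gamma factors: those produced by the archimedean unipotent integrations must be shown to match exactly those built into $L^*(1,\text{\rm Ad}\,\phi_k)$ and $L^*(1+s_j-s_\ell,\phi_j\times\phi_\ell)$, and the partition of the ``naive'' Borel product $\prod\zeta^*(1+\cdots)^{-1}$ into intra-block adjoint pieces and cross-block Rankin-Selberg pieces must be performed without leaving extraneous factors. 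The remaining $n$-dependent proportionality constant is then absorbed into the one appearing in Proposition~\ref{FirstCoeffPMin}.
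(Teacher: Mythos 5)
Your opening paragraph is exactly the paper's strategy (the ``template method'' of \cite{GMW2021}): replace each $\phi_k$ by a suitably normalized Borel Eisenstein series with the same Langlands parameters, apply Proposition \ref{FirstCoeffPMin} to the resulting Borel Eisenstein series with parameters $\alpha_{_{\mathcal P,\Phi}}(s)$, cancel the intra-block $\zeta^*$ factors against the normalization, and recognize the cross-block product $\prod_{i,j}\zeta^*\big(1+s_k-s_\ell+\alpha_{k,i}-\alpha_{\ell,j}\big)$ as the template of $L^*(1+s_k-s_\ell,\phi_k\times\phi_\ell)$. Your use of the Petersson-norm identity $\langle\phi_k,\phi_k\rangle \propto |A_{\phi_k}(1,\ldots,1)|^2\,L^*(1,\mathrm{Ad}\,\phi_k)$ to produce the adjoint factor is also the paper's argument, via \cite{GSW21}. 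Had you stopped there and carried out the bookkeeping, you would have reproduced the proof.

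However, you then declare the template argument a ``heuristic'' and attempt to justify the cross-block factors by a direct unfolding, and this is where there is a genuine gap. You assert that the integration over $N^{\mathcal P}$ coupling the Whittaker functions of $\phi_{\ell_1}$ and $\phi_{\ell_2}$ ``is the Jacquet--Shalika/Rankin--Selberg integral representation and evaluates to $L^*(1+s_{\ell_1}-s_{\ell_2},\phi_{\ell_1}\times\phi_{\ell_2})^{-1}$.'' That single sentence is the entire content of the Langlands--Shahidi computation, and the mechanism you describe is not the right one: the \emph{inverse} Rankin--Selberg $L$-function does not arise from an archimedean unipotent integral evaluating to $1/L^*$. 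It arises from the non-archimedean part of the unfolded sum --- a Dirichlet series in the Hecke eigenvalues of $\phi_{\ell_1}$ and $\phi_{\ell_2}$ whose evaluation requires the Casselman--Shalika/Shintani formula (this is already visible for $n=2$, where the $\zeta(2s)^{-1}$ in \eqref{FourierExpEisensteinGL(2)} comes from a Dirichlet series of Ramanujan sums, not from the archimedean Whittaker integral). The archimedean integral only supplies Gamma factors, and matching those against the completed $L^*$ is itself nontrivial. As written, the hardest step of your proof is asserted rather than proved, and the stated justification for it is incorrect. The paper avoids all of this precisely by treating the Borel case (Proposition \ref{FirstCoeffPMin}, proved in \cite{GMW2021}) as the template and never performing the cuspidal unfolding directly; if you want a self-contained direct proof along your lines, you must either import the Casselman--Shalika formula and the local Langlands--Shahidi coefficients, or fall back on the template argument you began with.
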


\begin{proof}
 To prove Theorem \ref{Thrm:FirstEisCoeff} we apply the template method introduced in \cite{GMW2021}. In the template protocol we replace each cusp form  $\phi_k$ in $\Phi$ with a (smaller) Borel Eisenstein series 
$$E_\mathcal B\left(*, \,\alpha^{(k)}\right)$$
with the same Langlands parameters as $\phi_k.$
The next step is to determine the correct normalization of $E_\mathcal B\left(*, \,\alpha^{(k)}\right)$. Since $\phi_k$ has Petersson norm =1, it follows from \cite{GSW21} that the first Fourier coefficient of $\phi_k$ (denoted $A_{\phi_k}(1,\ldots,1)$) is given by
 $$A_{\phi_k}(1,\ldots,1) = \begin{cases} L(1, \text{\rm Ad}\,\phi_k)^{-\frac12} \prod\limits_{1\le i<j\le n_k} \Gamma\Bigl(\frac{1+\alpha_{k,i}-\alpha_{k,j}}{2}   \Bigr)^{-1}& \text{if} \; n_k > 1,\\
 \; 1 & \text{if} \; n_k = 1.
 \end{cases}
 $$
  up to a non-zero constant factor with absolute value depending  only on $n$. This together with  (\ref{FirstCoeffPMin}) shows that 
  \begin{equation}\label{replacement}
   A_{\phi_k}(1,\ldots,1) \left(\,\prod_{1\le i<j\le n_k} \zeta^*\left(1+\alpha_{k,i}-\alpha_{k,j}   \right)\right) \cdot E_\mathcal B\left(*, \,\alpha^{(k)}\right)
  \end{equation}
   has exactly the same first coefficient as $\phi_k$ up to a non-zero constant factor with absolute value depending  only on $n$.

\vskip 5pt
Recall the Langlands parameters  of $E_{\mathcal P, \Phi}(g,s)$ (denoted $\alpha_{_{\mathcal P,\Phi}}(s)$)  given by
\begin{align}\label{langlandsparamsforPhi2}
&\bigg (\overbrace{\alpha_{1,1}+s_1, \;\ldots \;,\alpha_{1,n_1}+s_1}^{n_1 \;\,\text{\rm terms}}, \quad\overbrace{\alpha_{2,1}+s_2, \;\ldots \;,\alpha_{2,n_2}+s_2}^{n_2 \;\,\text{\rm terms}},\\\nn&\hskip160pt
\quad\ldots \quad
,\overbrace{\alpha_{r,1}+s_r, \;\ldots \;,\alpha_{r,n_r}+s_r}^{n_r \;\,\text{\rm terms}}\bigg).  \end{align}
\vskip 5pt
 By replacing each $\phi_k$ with (\ref{replacement}) we may form a new Borel Eisenstein series $E_{\mathcal B, \text{\rm new}}$ with Langlands parameters given by (\ref{langlandsparamsforPhi2}).
We then apply Proposition \ref{FirstCoeffPMin} to obtain the first coefficient of $E_{\mathcal B, \text{\rm new}}$ which takes the form

\begin{align*}
&\Biggl[\;\underset{n_k\ne 1}{\prod_{1\le k \le r}}L(1, \text{\rm Ad}\,\phi_k)^{-\frac12} \prod\limits_{1\le i<j\le n_k} \Gamma\biggl(\frac{1+\alpha_{k,i}-\alpha_{k,j}}{2}   \biggr)^{-1}\\&\quad\ \cdot\biggl(\prod_{1\le i<j\le n_k} \zeta^*\big(1+\alpha_{k,i}-\alpha_{k,j}   \big)\biggr)\Biggr]
\left(\prod_{1\le k\le r}\prod_{1\le i<j\le n_k} \zeta^*\big(1+\alpha_{k,i}-\alpha_{k,j}    \big)\right)^{-1}\\
&\quad\ 
\cdot 
\Biggl(\prod_{1\le k <\ell\le r} \;\prod_{1\le i\le n_k} \prod_{1\le j\le n_\ell} \zeta^*\big(1+s_k-s_\ell+\alpha_{k,i} -\alpha_{\ell,j}  \big)\Biggr)^{-1}\\& = \Biggl(\underset{n_k\ne 1}{\prod_{1\le k \le r}} L^*(1, \,\text{\rm Ad} \,\phi_k)^{-\frac12}\Biggr)\\&\quad\ \cdot \Biggl(\prod_{1\le k <\ell\le r}  \;\prod_{1\le i\le n_k} \prod_{1\le j\le n_\ell} \zeta^*\big(1+s_k-s_\ell+\alpha_{k,i} -\alpha_{\ell,j}  \big)\Biggr)^{-1} ,
\end{align*}
up to a non-zero constant factor with absolute value depending  only on $n$.

\vskip 5pt
By the template method, the occurrence of $\zeta^*\big(1+s_k-s_\ell+\alpha_{k,i} -\alpha_{\ell,j}  \big)$ in the first coefficient of $E_{\mathcal B, \text{\rm new}}$ tells us that  $L^*\big(  1+s_k-s_{\ell}, \;\phi_k\times\phi_{\ell}\big)$ is the corresponding component of the first coefficient of  $E_{\mathcal P, \Phi}(g,s)$ provided neither $\phi_k$ or $\phi_\ell$ are the constant function one. The other cases (when one or both of $\phi_k, \,\phi_\ell$ equal 1)  follow in a similar manner.
\end{proof}

\subsection*{Acknowledgements}
 Dorian Goldfeld is partially supported by Simons Collaboration Grant Number 567168.

\normalsize

\end{document}